\providecommand{\U}[1]{\protect\rule{.1in}{.1in}}
\newtheorem{theorem}{Theorem}[section]
\newtheorem{lemma}[theorem]{Lemma}
\newtheorem{proposition}[theorem]{Proposition}
\newtheorem{remark}[theorem]{Remark}
\newenvironment{proof}[1][Proof]{\textbf{#1.} }{\ \rule{0.5em}{0.5em}}
\newcommand{\diam}{{\rm diam}}
\newcommand{\cat}{{\rm cat}}
\newcommand{\Hnod}{{\rm \text{\scriptsize{H-}}nod}}
\newcommand{\Hf}{{\rm \text{\scriptsize{H}}}}
\newcommand{\Hscap}{{\rm \text{\scriptsize{H-}}cap}}
\newcommand{\meiocap}{\text{\scriptsize{$(1/2)$-}}{\rm cap}}
\newcommand{\Gr}{{\rm Gr}}
\newcommand{\HH}{\mathbb{H}^2}
\newcommand{\Hhorocoisa}{{\rm \text{\scriptsize{H-}}horosurface}}
\newcommand{\Hhoro}{\Hnod_{\infty}}
\title{Constant mean curvature surfaces in $\HH\times\mathbb{R}$ with  boundary in two parallel planes}
\author{Patricia Klaser, Rodrigo Soares, Miriam Telichevesky}
\begin{document}

\maketitle

%
\begin{abstract}
Given $H\in [0,\infty),$ some sufficient conditions for existence of CMC $H$ graphs with boundary in two parallel planes of $\mathbb{H}^2\times\mathbb{R}$ are presented. Height estimates for outwards-oriented CMC surfaces (horo)cyllindrically bounded are also exhibited.
\end{abstract}

\section{Introduction}

We study the existence and non existence of constant mean curvature (CMC) surfaces with boundary in two parallel planes in $\HH\times\mathbb{R},$ where $\mathbb{H}^2$ is the hyperbolic plane of curvature $-1.$ This issue was first treated in the Euclidean $3-$space, beginning with the classification of minimal annuli with boundary two circles in parallel planes by B. Riemann in 1898 \cite{RI}, which was concluded M. Shiffman \cite{SHI} in 1956. In \cite{AH} A. Ros and H. Rosenberg proposed the problem of finding a CMC annulus having as boundary two given convex Jordan curves in distinct parallel planes of $\mathbb{R}^3.$ In this setting, J. Ripoll and N. Espirito Santo \cite{NJ} considered the case of the boundary curves $\alpha$ and $\beta,$ in two parallel planes, are such that the orthogonal projection of $\alpha$ in the plane that contains $\beta$ is inside the region bounded by $\beta$. With hypothesis that relate the mean curvature $H$, the distance $h$ between the parallel planes and the curvatures of 
$\alpha$ and $\beta,$ an existence (Theorem 2.2) and a non existence (Proposition 3.2) result for a CMC surface with boundary $\beta \cup \alpha$ are proved. This problem was also studied by A. Aiolfi, P. Fusieger and J. Ripoll in \cite{AF}, \cite{AFR} and \cite{FR}, where  the authors considered radial graphs and demonstrated existence results.
 
If one replaces the ambient space $\mathbb{R}^3$ by a product space $M\times \mathbb{R},$ the problem described above still makes sense. One asks whether there are CMC surfaces with boundary two convex curves in $M\times \{0\}$ and $M\times \{h\}.$ L. Hauswirth \cite{Hau} studied generalized Riemann minimal surfaces in $\mathbb{H}^2\times \mathbb{R}$ and $\mathbb{S}^2\times \mathbb{R},$ by adapting the work of M. Shiffman \cite{SHI} and A. Barbosa \cite{AT} established a version of 
 results for minimal surfaces of \cite{NJ} in $\mathbb{H}^{2}\mathbb{\times R}$ by using Perron's Method. Other related results in $\mathbb{H}^{2}\times\mathbb{R}$ can be found in \cite{BRWE}, \cite{LHJ}, \cite{SET}, \cite{GC} and \cite{MBR}.

In this paper we treat the CMC case in the ambient space $\mathbb{H}^2\times \mathbb{R}.$ We prove existence and non existence results for CMC surfaces with boundary consisting in two Jordan curves $\beta \subset \mathbb{H}^2\times \{0\}$ and $\alpha \subset \mathbb{H}^2\times \{h\}$ and such that the orthogonal projection 
of $\alpha$ in $\mathbb{H}^2\times \{0\},$ which we also denote by $\alpha,$ is inside the region bounded by $\beta.$ We orient the CMC surface so that the normal vector $N$ points downwards, in the sense that it points to the negative part of the real axis,
 and we assume $H\ge0.$ Existence results for $h\ge 0,$ which corresponds to $\alpha$ above $\beta,$ and $h<0$ are then presented. It can also be observed that for $H>1/2$ the existence hypothesis are more restrictive, since 
the behaviour of some natural CMC barriers depends on which of the intervals $[0,1/2]$ or $(1/2,+\infty)$ the mean curvature $H$ belongs to. As far as the authors know, the case $H>1/2$ has not been treated in the literature yet.

In our setting, we look for a function $u \in C^{2}(\Omega)\cap C^{0}(\overline{\Omega})$ that has as graph a CMC $H$ surface with boundary $\alpha \cup \beta.$ This occurs if and only if $u$ is solution of the following Dirichlet problem%

\begin{equation}\label{eq_problemaPH}
\left\{
\begin{array}
[c]{l}%
Q_{H}(u)=\mathrm{div}\left(  \frac{\nabla u}{\sqrt{1+\rvert\nabla u\rvert^{2}%
}}\right)  +2H=0  \text{ in }\Omega\\
u\rvert_{\alpha}=h  \\
u\rvert_{\beta}=0
\end{array}
\right.
\end{equation}
where $\Omega\subset\mathbb{H}^{2}$ is the annular domain enclosed by
$\alpha \cup \beta$ in $\mathbb{H}^2\times\{0\}$ 
 and $\operatorname{div}$ and $\nabla$ are the divergence and the gradient in $\mathbb{H}^{2}$.

The proofs of the main results are based on construction of geometric barriers, 
that are CMC surfaces, which we name $\Hf-$nodoid, catenoid and $\Hf-$spherical cap. They are produced rotating graphs of certain functions, which are detailed in Section \ref{sec-rotacionais}. However, the statements of the main theorems require the knowledge of the $\Hf-$nodoid of inner radius $r.$ Its name is inspired by its behavior, which reminds the one of the nodoids in $\mathbb{R}^3$ and it is generated by the function

\begin{equation}\label{eq-Hnod}
\Hnod_r(s)=\displaystyle{\int_0^{s} \frac{2H(\cosh(r)-\cosh(r+t))+\sinh(r)}{\sqrt{\sinh^2(r+t)-(2H(\cosh(r)-\cosh(r+t))+\sinh(r))^2}}dt,}\end{equation}
 defined in $[0,T_H],$ where


\begin{equation}\label{eq-TH}T_H:=\left\{\begin{array}{l}+\infty,\text{ if } H\le 1/2,\\ \ln\left(\frac{2H+1}{2H-1}\right), \text{ if }H>1/2.\end{array}\right. \end{equation}

The fact that $T_H$ does not depend on $r$ is remarkable: For $H>1/2,$ the distance between two vertical slope points of $\Hnod$ does not depend on the size of its neck. If, in the limit, the neck is taken with radius 0, then $\Hnod_0=\Hscap$ corresponds to the $\Hf-$spherical cap and $2T_H$ is the distance between two vertical tangents of the cap. 
It is possible to obtain an expression using elementary functions for $\Hscap,$ and that is the reason why it does not appear explicitly on the hypothesis of the results. We also include the so-called catenoid as a particular case of nodoid defining $\cat_r:=\,{\rm \text{\scriptsize{$0$-}}nod_r}.$ 

\

Let us fix some notation to state the main results. Let $\alpha$ and $\beta$ be two simple closed curves in $\HH\times\{0\}$ such that $\alpha$ is contained in the interior of the region bounded by $\beta.$ Suppose that $\alpha$ satisfies the interior circle condition of radius $r$ and that $\beta$ satisfies the exterior circle condition of radius $R$ (See Section \ref{sec-proofs} for a precise definition). Denote by $\Omega$ the domain bounded by $\alpha \cup \beta$ and by $d$ the distance between $\alpha$ and $\beta.$

\begin{figure}[tbh]
\centering
\includegraphics[scale=0.4]{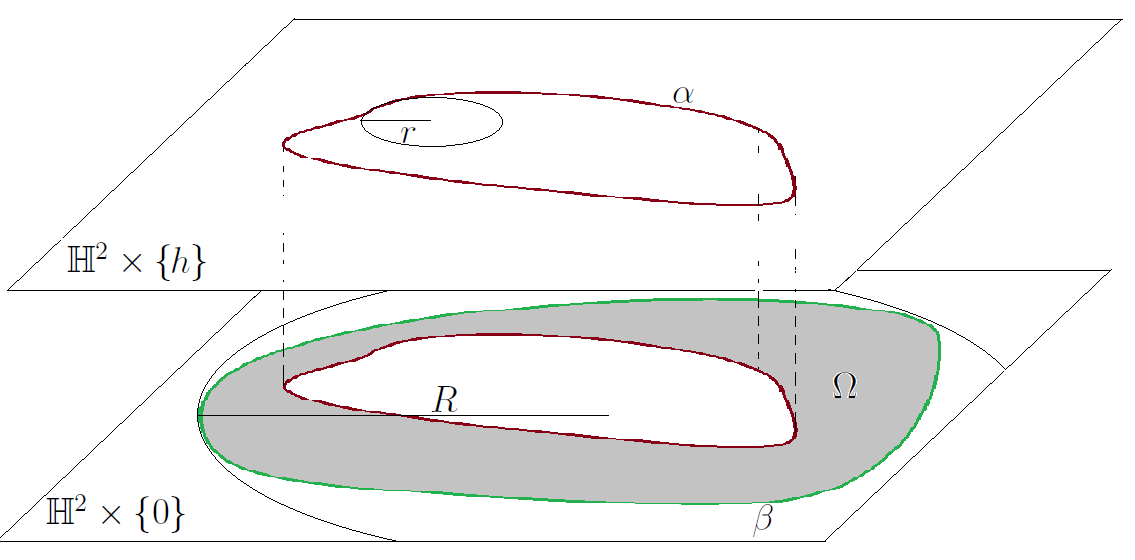}
\end{figure}


\begin{theorem}\label{teo-h>0}
Given $H\geq 0,$ consider Problem \eqref{eq_problemaPH} associated to $H.$ If 
\begin{equation}\label{eq-corollary}
\diam \beta \leq d+2 \cosh^{-1}\left(\cosh(r)+\frac{\sinh(r)}{2H}\right),
\end{equation}

\begin{equation}\label{eq-hipsubs}0\leq h\leq \max\left\{\cat_r(d),\frac{2Hd}{\sqrt{\coth^2(r)-4H^2}}\right\},
\end{equation}
which means $h<\infty$ if $\coth^2(r)-4H^2\leq 0;$
and additionally if $H>1/2,$ it holds that
\begin{equation}\label{eq-cap}
R\le T_H \text{ and }
0\leq h\leq 
\frac{4H}{\sqrt{4H^2-1}}\arctan\left(\sqrt{\frac{1-4H^2 \tanh^2\left(\frac{T_H-d}{2}\right)}{4H^2-1}}\right),
 \end{equation}
then Problem \eqref{eq_problemaPH} has a unique solution $u\in C^2(\Omega)\cap C^0(\overline{\Omega}).$
\end{theorem}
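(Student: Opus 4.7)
Uniqueness is a direct consequence of the comparison principle for the divergence-form, quasilinear operator $Q_H$, which is elliptic and strictly monotone in $\nabla u$; two $C^{2}(\Omega)\cap C^{0}(\overline{\Omega})$ solutions sharing the same Dirichlet data must coincide. For existence, the strategy is to combine a priori $C^{1,\gamma}(\overline{\Omega})$ estimates with either Perron's method or the classical continuity method: embed the problem in a one-parameter family of Dirichlet problems and show the solvability set is both open (implicit function theorem in Hölder spaces) and closed (Arzelà--Ascoli plus elliptic regularity). The $C^{0}$ bound $0\le u\le h$ follows from the comparison principle against the constants $0$ and $h$, and interior gradient estimates are classical for CMC graphs in $\HH\times\rr$. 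The substance of the argument is therefore the boundary gradient estimate.

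This estimate is obtained by constructing upper and lower barriers at each point of $\partial\Omega$ from the rotational CMC surfaces of Section \ref{sec-rotacionais}. At a point $p\in\alpha$, the interior circle condition of radius $r$ supplies a geodesic disk tangent to $\alpha$ at $p$ and disjoint from $\Omega$; over this disk I would erect an $\Hf$-nodoid of neck radius $r$ with its neck pinned at height $h$, producing a graph whose restriction to $\Omega$ serves as an upper barrier for $u$. Hypothesis \eqref{eq-hipsubs} is exactly what ensures that the vertical descent of this surface across the horizontal distance $d$ already exceeds $h$---via the catenoid term $\cat_{r}(d)$ or, in the limiting nodoidal regime, its asymptotic slope $2Hd/\sqrt{\coth^{2}(r)-4H^{2}}$---while \eqref{eq-corollary} provides the horizontal room required by $\diam\beta$, so that the barrier actually covers $\Omega$. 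A symmetric construction using the exterior circle condition of radius $R$ places catenoids or $\Hf$-spherical caps beneath $u$ along $\beta$, producing the matching lower bound and hence a uniform bound on $|\nabla u|$ on $\partial\Omega$.

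The main obstacle is the supercritical regime $H>1/2$. There $T_H=\ln\!\bigl((2H+1)/(2H-1)\bigr)$ is finite, so the $\Hf$-nodoid has only bounded horizontal reach and cannot by itself barrier arbitrary geometries. The inequality $R\le T_H$ in \eqref{eq-cap} is precisely what allows an $\Hf$-spherical cap of appropriate radius to be inscribed at each point of $\beta$, and the accompanying height bound is exactly the maximal vertical extent of that cap over a horizontal distance $T_H-d$, thereby guaranteeing that $u$ is effectively trapped between the two families of barriers. The technical heart of the proof consists in verifying that the three rotational CMC families---$\Hf$-nodoid, catenoid and $\Hf$-spherical cap---can be positioned at every boundary point with the correct neck radius, axis and height, and in passing from these pointwise comparisons to a boundary gradient bound uniform in the continuity parameter.
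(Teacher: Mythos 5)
Your overall architecture (uniqueness by the comparison principle, existence by barrier constructions from the rotational CMC surfaces) is in the spirit of the paper, but two of your key steps do not work as stated. First, the claimed $C^{0}$ bound $0\le u\le h$ "by comparison against the constants $0$ and $h$" fails for $H>0$: constants satisfy $Q_H(\mathrm{const})=2H\ge 0$, so they are \emph{subsolutions}, and the constant $h$ gives no upper bound (indeed the solution typically rises above $h$ near $\alpha$, as the nodoid barrier itself does). The paper replaces this with a genuine global supersolution (Lemma \ref{lema-supersol}): a large $\meiocap$ cap when $H\le 1/2$, and an $\Hf$-spherical cap over a disk of radius $T_H$ tangent to $\beta$ when $H>1/2$ --- and it is exactly there that $R\le T_H$ and the height bound in \eqref{eq-cap} (which equals $\Hscap(T_H-d)-\Hscap(T_H)$) are consumed. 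Without such a supersolution the Perron supremum need not be finite and your continuity-method $C^{0}$ estimate is missing, so this is a substantive gap, not a technicality.

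Second, your barrier bookkeeping at the boundary misassigns the hypotheses. In the paper, the nodoid $w_p=\Hnod_r(s)+h$ with neck on the interior tangent circle is the \emph{upper} barrier at $p\in\alpha$, and what it needs is \eqref{eq-corollary}: via $s(q)\le \diam\beta-d-2r\le 2x_H(r)$ and the reflection estimate $\Hnod_r\ge 0$ on $[0,2x_H(r)]$ (Proposition \ref{miriam}), one gets $w_p\ge h\ge\varphi$ on all of $\overline{\Omega}$. Hypothesis \eqref{eq-hipsubs} is instead needed for the \emph{lower} barrier at $p\in\alpha$, which you never supply: either $h-\cat_r(s)$ when $h\le\cat_r(d)$, or the truncated cone $h-(h/d)s$, which is a subsolution precisely when $h/\sqrt{h^2+d^2}\le 2H\tanh r$, i.e.\ when $h\le 2Hd/\sqrt{\coth^2 r-4H^2}$ (this quantity is the cone's height, not an "asymptotic slope" of the nodoid). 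Without this subsolution you cannot conclude $u\to h$ at $\alpha$ from below. Finally, at points of $\beta$ the spherical caps act as \emph{upper} barriers (tangent exterior circle, again via Lemma \ref{lema-supersol}), while the lower barrier there is simply the constant $0$; your "symmetric construction placing caps beneath $u$ along $\beta$" has the orientation reversed.
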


%
%
%
%

Given two Jordan curves $\alpha$ and $\beta$ in parallel planes of $\mathbb{H}^2\times \mathbb{R},$ there are two natural ways of searching CMC $H$ surfaces with boundary $\alpha\cup \beta.$ They consist in considering the two possible orientations of the surface. In the case of graphs as treated here, one corresponds to $h>0$ and the other to $h<0.$
For $h<0$, the mean curvature vector of the graph of the solution of \eqref{eq_problemaPH} points ``outwards'', in the sense that it points to the unbounded connected component of $(\HH\times[h,\max u])\setminus\Gr(u)$. In the context of orientation outwards we have an existence and non-existence results, as below.

\begin{theorem}\label{teo-h<0}
Let $H\geq 0,$ if $\diam \beta - (2r+d)\le T_H,$ $R\le T_H,$ and
\begin{equation}0 \leq -h\le \min\{\Hnod_r(d), \Hnod_r(\diam \beta - (2r+d))\}
\label{eq-teoh<0nod}
\end{equation}
then Problem \eqref{eq_problemaPH} has a unique solution $u\in C^2(\Omega)\cap C^0(\overline{\Omega}).$
\end{theorem}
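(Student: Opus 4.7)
The plan is to apply the continuity method to the Dirichlet problem \eqref{eq_problemaPH}. Consider the family
\[Q_{sH}(u_s)=0\text{ in }\Omega,\quad u_s\rvert_{\alpha}=sh,\quad u_s\rvert_{\beta}=0,\qquad s\in[0,1],\]
so $u_0\equiv 0$ solves the problem at $s=0$. Openness of the admissible parameter set follows from the implicit function theorem in $C^{2,\gamma}(\overline{\Omega})$ applied to the linearization of $Q_{sH}$ at a solution, which is a uniformly elliptic operator invertible under vanishing Dirichlet data. Closedness reduces, via standard interior gradient estimates for prescribed mean curvature graphs combined with Schauder theory, to uniform a priori $C^{0}(\overline{\Omega})$ height bounds and $C^{1}$ bounds at $\partial\Omega$ along the family. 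Uniqueness will then follow from the maximum principle applied to the difference of any two solutions of $Q_H(u)=0$ with the same boundary values.

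Both a priori estimates are supplied by comparison with barriers constructed from the $\Hf$-nodoid of \eqref{eq-Hnod}. For each $p\in\alpha$ the interior circle condition yields a geodesic disk $B_r(c_p)$ of radius $r$ tangent to $\alpha$ at $p$ and contained in the region enclosed by $\alpha$. Consider the rotationally symmetric graph, defined over the hyperbolic annulus $\{r\le\rho\le r+T_H\}$ with $\rho=\mathrm{dist}(\cdot,c_p)$, having radial profile $sh+\Hnod_r(\rho-r)$. By Section~\ref{sec-rotacionais} this is a CMC-$H$ surface with outward-pointing mean curvature vector, coinciding with $u_s$ at the single point $p$ and rising to height $\ge 0$ on $\beta$ precisely when $-sh\le\Hnod_r(\rho-r)$ for all $\rho$ realized along $\beta$, which is guaranteed by \eqref{eq-teoh<0nod}. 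This surface serves as a barrier at $p$. A symmetric construction centered at the exterior disk of radius $R$ at each $q\in\beta$ provides barriers on the outer boundary. The two size hypotheses $R\le T_H$ and $\diam\beta-(2r+d)\le T_H$ are included precisely so that the projected annulus of definition $\{r\le\rho\le r+T_H\}$ (and its exterior analogue) contains $\overline{\Omega}$, making these rotational surfaces honest graphs over the whole domain.

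The main obstacle I anticipate is the geometric inclusion step: verifying that for every admissible base point $c_p$, each point of $\overline{\Omega}$ lies within hyperbolic distance $r+T_H$ of $c_p$, with the tight bound $\diam\beta-(2r+d)\le T_H$ rather than the obvious $\diam\beta\le T_H$. This boils down to a triangle-inequality argument using that the minimum width of the annular domain is $d$ and that $c_p$ is displaced from $\alpha$ by $r$, so that the geodesic from $c_p$ to the farthest point of $\overline{\Omega}$ must traverse both of these ``reserved'' segments. Only the regime $H>1/2$ makes this a genuine restriction, since $T_H=+\infty$ for $H\le 1/2$ and the inclusion is then automatic. Once this lemma and its exterior counterpart are in place, the maximum principle gives the global $C^{0}$ bound, comparison of normal derivatives at each tangent point of $\partial\Omega$ yields the boundary gradient bound, the continuity argument closes, and uniqueness is immediate, completing the proof.
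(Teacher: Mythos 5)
Your overall frame (continuity method plus rotational barriers) could in principle replace the paper's Perron argument, but the proposal has a genuine gap exactly where the hard work lies: the barriers at points of $\beta$. The ``symmetric construction centered at the exterior disk of radius $R$'' does not exist in the form you suggest: an $\Hf-$nodoid profile is a graph over the \emph{exterior} of its neck disk, whereas $\overline{\Omega}$ lies \emph{inside} the exterior tangent disk at $q\in\beta$, so the mirrored nodoid is not even defined over the domain. What actually works from above at $\beta$ is the spherical cap of Lemma \ref{lema-supersol} (of radius $T_H$ when $H>1/2$), and this --- not graph-domain containment for a nodoid --- is where the hypothesis $R\le T_H$ enters. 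More seriously, from below at $\beta$ nothing simple works in the present case $h<0$: a lower barrier at $q\in\beta$ must equal $0$ at $q$ while lying below $h<0$ on $\alpha$, so constants fail. The paper produces it by solving the upside-down minimal problem $Q_0(z)=0$ with data $-\varphi$, which is legitimate precisely because $-h\le\Hnod_r(d)\le\cat_r(d)$ (Fact 4 of Section \ref{sec-moreHnod}) allows Theorem \ref{teo-h>0} to be applied with $H=0$, and then $v=-z$ is a subsolution since $Q_H(-z)=2H\ge0$. Your proposal contains no substitute for this step, and without it the boundary gradient estimate at $\beta$ --- hence closedness in your continuity scheme --- is unproved.

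Two further points. At $\alpha$ you assert that \eqref{eq-teoh<0nod} ``guarantees'' $-sh\le\Hnod_r(\rho-r)$ for every distance realized along $\beta$; but \eqref{eq-teoh<0nod} only controls the two values $\Hnod_r(d)$ and $\Hnod_r(\diam\beta-(2r+d))$, and to reach all intermediate distances you need both the estimate $d\le s(q)\le\diam\beta-(2r+d)$ for $q\in\beta$ and the concavity (unimodality) of $\Hnod_r$, so that its minimum on that interval is attained at an endpoint --- this is the step that makes \eqref{eq-teoh<0nod} the correct hypothesis, and it is absent from your argument. Finally, the continuity method in $C^{2,\gamma}(\overline{\Omega})$ presupposes boundary and data regularity beyond the paper's setting (curves satisfying only interior/exterior circle conditions, solution sought in $C^2(\Omega)\cap C^0(\overline{\Omega})$); Perron's method requires only the pointwise barriers, which is why it is the natural tool here and why your scheme would need an extra approximation argument even after the barrier gaps are filled.
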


\

The non-existence results are stated below. They assure that if the distance between the two parallel planes is large, then there is not a CMC surface with boundary in them and mean curvature vector pointing outwards. We point out that these results apply also to CMC surfaces that are not graphs and the first one holds even for unbounded surfaces. Both statement and proofs of Theorems \ref{theo-nonexist} and \ref{teo-nonexistBr} are adapted from the situation on $\mathbb{R}^3$ presented in \cite{NJ}. We emphasize that the estimates are sharp in the sense that the $\Hhorocoisa$s and the $\Hf-$nodoids (see Section \ref{sec-rotacionais}) provide examples.

\begin{theorem}\label{theo-nonexist}
Let $M\subset \HH\times \mathbb{R}$ be an immersed connected $H-$surface contained in an horocylinder $B\times \mathbb{R}$, where $B$ is an horodisc. Suppose that $M$ intersects two parallel slices $\HH\times \{0\}$ and $\HH\times\{h\}$, $\partial M \cap (\HH\times (0,h))=\emptyset$
and that the mean curvature vector of $M$ points to $\partial B\times \mathbb{R}.$ 

Then $h\le h_H$, where $h_H$ is given by $$ h_H:=\left\{\begin{array}{l}\pi -\frac{8H}{\sqrt{1-4H^2}}\tanh^{-1}\left(\frac{1-2H}{\sqrt{1-4H^2}}\right),\text{ if }H< 1/2,\\ \pi-2,\text{ if }H=1/2, \\ \pi - \frac{8H}{\sqrt{4H^2-1}} \tan^{-1}\left(\frac{2H-1}{\sqrt{4H^2-1}}\right),\text{ if }H>1/2.\end{array}\right.$$
\end{theorem}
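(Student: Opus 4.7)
The plan is to argue by contradiction, using an H-horosurface (from Section \ref{sec-rotacionais}) as a geometric barrier together with the tangency principle for CMC surfaces. Let $p_\infty \in \partial_\infty \mathbb{H}^2$ be the asymptotic point of the horodisc $B$, and let $\{\phi_s\}_{s\in\mathbb{R}}$ denote the parabolic one-parameter subgroup of $\mathrm{Isom}(\mathbb{H}^2)$ fixing $p_\infty$, extended trivially on the $\mathbb{R}$-factor. Consider the H-horosurface $\Sigma$ of mean curvature $H$, invariant under $\{\phi_s\}$, with mean curvature vector pointing away from $p_\infty$ (the same ``outwards'' orientation as $M$). Its compact ``waist'' (the arc of the profile curve between two consecutive vertical-tangent points) has vertical extent exactly $h_H$; this follows from integrating the profile-curve ODE in the spirit of \eqref{eq-Hnod}, and the three cases in the definition of $h_H$ correspond to whether the integral is improper at infinity ($H<1/2$), borderline ($H=1/2$), or taken over a finite interval between two real branching points ($H>1/2$).

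Assume, for contradiction, that $h > h_H$. Vertically translate $\Sigma$ so that its two vertical-tangent horocycles lie strictly between the slices $\mathbb{H}^2\times\{0\}$ and $\mathbb{H}^2\times\{h\}$, which is possible because $h - h_H > 0$. For $|s|$ large, $\phi_s(\Sigma)$ lies in a horocylinder far from $B\times\mathbb{R}$ and hence disjoint from $M$. Sliding $s$ back toward $0$, let $s^*$ be the first parameter at which $\phi_s(\Sigma)\cap M \ne \emptyset$. Because $M$ is connected, meets both ends of the slab, and lies in $B\times\mathbb{R}$, while the waist of $\phi_s(\Sigma)$ must sweep across $B\times\mathbb{R}$ for intermediate $s$, such an $s^*$ exists; the hypothesis $\partial M \cap (\mathbb{H}^2\times(0,h))=\emptyset$ combined with the strict placement of the waist inside $\mathbb{H}^2\times(0,h)$ forces the contact to occur at an interior point $q$ of $M$.

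At $q$, the two CMC $H$ surfaces share a tangent plane, and by our choice of orientations their mean curvature vectors coincide. The tangency principle then forces the connected component of $M$ through $q$ to coincide with $\phi_{s^*}(\Sigma)$, which is impossible because $\phi_{s^*}(\Sigma)$ extends past $\partial B\times\mathbb{R}$ while $M \subset B\times\mathbb{R}$. I expect the main obstacle to be making this sliding/first-contact step rigorous when $M$ is merely immersed and possibly unbounded: one must check that the parabolic family really does sweep out a region whose complement initially contains $M$, and that the first touch cannot land on $\partial M$, both of which rely on the strict gap $h - h_H > 0$ combined with the boundary hypothesis on $M$. The borderline sharpness of the estimate (attained by $\Sigma$ itself) is evidence that these technical checks indeed go through precisely when $h > h_H$.
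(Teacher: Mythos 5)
Your overall strategy --- comparing $M$ with a one-parameter family of $\Hf$-horonodoids sharing the ideal point of $B$, locating a first interior contact, and invoking the tangency principle, with sharpness witnessed by the horonodoid itself --- is exactly the paper's. But the sweeping family you propose does not move, and this breaks the central step. The parabolic subgroup $\{\phi_s\}$ fixing $p_\infty$ preserves every horocycle centered at $p_\infty$, hence preserves $B\times\mathbb{R}$ setwise, and by your own requirement $\Sigma$ is invariant under $\{\phi_s\}$; therefore $\phi_s(\Sigma)=\Sigma$ for all $s$. In particular the claims ``for $|s|$ large, $\phi_s(\Sigma)$ lies in a horocylinder far from $B\times\mathbb{R}$'' and ``the waist of $\phi_s(\Sigma)$ must sweep across $B\times\mathbb{R}$'' are false, there is no first-contact parameter $s^*$, and the contradiction never materializes. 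The motion you actually need is transverse to the parabolic orbits: fix the vertical position (translate the $\Hhorocoisa$ upward by $h_H/2$, so that its compact part lies in $\HH\times[0,h_H]\subset\HH\times[0,h)$ when $h>h_H$) and vary the neck among the concentric horocycles $C_t\subset B$ at distance $t$ from $\partial B$ --- equivalently, push $\Sigma$ by hyperbolic translations along a geodesic asymptotic to $p_\infty$. As $t\to 0^+$ the portion of this surface $\mathcal{S}_t$ inside $B\times\mathbb{R}$ degenerates to a curve on $\partial B\times\mathbb{R}$, and as $t$ increases the family $\{\mathcal{S}_t\}_{t\ge 0}$ sweeps $B\times[0,h_H]$; this is what produces a first tangency with $M$, which the hypotheses $\partial M\cap(\HH\times(0,h))=\emptyset$ and $h>h_H$ force to be interior, and the tangency principle then gives the absurdity, as in the paper.

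A secondary inaccuracy: $h_H$ is not the vertical extent ``between two consecutive vertical-tangent points'' of the profile. It equals $2\max \Hnod_{\infty}$, i.e.\ the vertical distance between the two horizontal-tangency bulges of the $\Hhorocoisa$ (compare \eqref{eq-maxHhoro} with the formula for $h_H$); for $H\le 1/2$ the profile has a single vertical-tangent point (the neck) and the surface is vertically unbounded, so your characterization does not even parse in that range. This matters because the barrier's usable part is the piece between the two bulges, which is what must be positioned inside the slab before sweeping.
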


If $M\subset \HH\times \mathbb{R}$ is an immersed connected $H-$surface contained in an ordinary cylinder $B_{r^*}\times \mathbb{R},$ the conclusion is actually that $h\le 2\max \Hnod_{r^*} \le h_H.$

\begin{theorem}\label{teo-nonexistBr}
Let $M\subset \HH\times \mathbb{R}$ be an immersed connected $H-$surface contained in a cylinder $B_{r^*}\times \mathbb{R}$, where $B_{r^*}$ is a disk of radius $r^*.$ Suppose that $M$ intersects two parallel slices $\HH\times \{0\}$ and $\HH\times\{h\}$, $\partial M \cap (\HH\times (0,h))=\emptyset$
and that the mean curvature vector of $M$ points to $\partial B_{r^*}\times \mathbb{R}.$  Then $$h\le 2\max \Hnod_{r^*}.$$\end{theorem}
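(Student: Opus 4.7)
My plan is to mirror the strategy used for Theorem~\ref{theo-nonexist}, replacing the $\Hhorocoisa$ barrier by the $\Hf$-nodoid $\Hnod_{r^*}$. First I would assemble the complete periodic rotational CMC $H$ surface $\mathcal{N}$ of neck radius $r^*$ by taking the graph of $\Hnod_{r^*}$ over the annulus $\{r^*\le\rho\le r^*+T_H\}$ centered on the axis of $B_{r^*}\times\mathbb{R}$ and iterating the reflections across the horizontal plane through the neck and across the vertical cylinder through the bulge. The resulting $\mathcal{N}$ is vertically periodic with period $2\max\Hnod_{r^*}$, rotationally symmetric, and naturally oriented so that its mean curvature vector points radially outward, matching the orientation prescribed on $M$.

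Next, assuming for contradiction that $h>2\max\Hnod_{r^*}$, I would consider a family of translates $\mathcal{N}_t$ of $\mathcal{N}$ and slide them looking for a first tangential contact with $M$. At such a contact point, both surfaces have mean curvature $H$ with the vector pointing outward and share the oriented tangent plane, so the tangency principle (Hopf's maximum principle for the quasilinear CMC equation) would force the local coincidence $M=\mathcal{N}_{t_0}$ near the contact. By the connectedness and analyticity of $M$, this equality propagates, so the whole connected component of $M$ would lie inside $\mathcal{N}_{t_0}$; however, the bulges of $\mathcal{N}_{t_0}$ reach radius $r^*+T_H>r^*$, contradicting $M\subset B_{r^*}\times\mathbb{R}$.

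The hard part will be setting up the family of barriers so that an admissible tangential contact actually occurs. With the coaxial positioning described above, $\mathcal{N}$ lies outside $\overline{B_{r^*}}\times\mathbb{R}$ except along its neck circles on $\partial B_{r^*}\times\mathbb{R}$, so a purely vertical sliding of coaxial nodoids alone cannot produce an interior contact. I therefore expect to enlarge the family to a two-parameter one, translating both the axis of $\mathcal{N}$ inside $\HH$ and the vertical level, and to show that as the parameters vary some $\mathcal{N}_{t_0}$ first touches $M$ in a configuration compatible with the tangency principle. The hypotheses $\partial M\cap(\HH\times(0,h))=\emptyset$ and the outward orientation of the mean curvature vector should be exactly what is needed to rule out spurious first contacts at $\partial M$ and to ensure the orientations match when invoking the maximum principle, just as in the horocylindrical analogue.
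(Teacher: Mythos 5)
Your barrier is not the one the paper uses, and as described it does not exist in the form you need. First, the complete rotational nodoid is \emph{not} obtained by ``reflection across the vertical cylinder through the bulge'': inversion in a metric circle of $\HH$ is not an isometry of $\HH\times\mathbb{R}$, so that reflection is unavailable. The continuation of the graph of $\Hnod_{r^*}$ past the outer vertical-tangent circle $s=T_H$ is the reflection across the \emph{horizontal} plane through that circle, and consequently the vertical period of the complete surface is $2\lvert\Hnod_{r^*}(T_H)\rvert$, not $2\max \Hnod_{r^*}$ (these differ; e.g.\ $\max\Hnod_r\to 0$ while $\lvert\Hnod_r(T_H)\rvert\to\lvert\Hscap(T_H)\rvert>0$ as $r\to 0$). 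Moreover for $H\le 1/2$ one has $T_H=+\infty$, so the periodic surface with ``bulges at radius $r^*+T_H$'' does not exist at all, although the theorem covers these $H$. Finally, the orientation claim is false: the mean curvature vector of a nodoid points away from the axis along its necks but \emph{toward} the axis along its outer equators, so a first contact occurring on a bulge need not have the orientation compatible with that of $M$, and the tangency principle then gives nothing.

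The deeper gap is the sweep itself, which you leave open and which cannot be repaired within your scheme: the complete periodic nodoid occupies every height, so no translate can be confined to the open slab $\HH\times(0,h)$, and hence a first contact at a point of $\partial M$ (which is only excluded from $\HH\times(0,h)$, but may lie inside the cylinder at heights $\le 0$ or $\ge h$) cannot be ruled out. Notice also that nowhere in your outline is the assumption $h>2\max\Hnod_{r^*}$ used quantitatively; since the estimate is sharp (suitable pieces of $\Hf$-nodoids inside $B_{r^*}\times\mathbb{R}$ realize heights up to $2\max\Hnod_{r^*}$), an argument that never invokes this inequality in an essential way must break down, and it breaks exactly at the unverified one-sidedness/orientation of the first contact. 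The paper avoids all of this by sweeping with a different family: the compact pieces $\mathcal{H}_r=\{(p,\pm\Hnod_r(s(p))):0\le s(p)\le x_H(r)\}$ for varying neck radius $r\in(0,r^*)$, i.e.\ the doubled graphs truncated at the maximum-height circle. Their total height $2\max\Hnod_r\le 2\max\Hnod_{r^*}<h$ allows them to be placed entirely inside the open slab (this is precisely where the hypothesis enters and why contacts with $\partial M$ are impossible), they shrink as $r\to 0$, and since $M$ is connected, joins the two slices and stays in $B_{r^*}\times\mathbb{R}$, some member of the family has an interior tangency with $M$ on the side where the mean curvature vectors agree; the tangency principle then yields the contradiction. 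If you want to salvage your approach, you would essentially have to truncate your nodoid to this compact piece and vary the neck radius, which is the paper's proof.
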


Although we do not have an explicit expression for $\max \Hnod_r,$  the behavior of the family $\Hnod_r$ with $r$ implies that it is at most $h_H/2,$ which makes Theorem \ref{teo-nonexistBr} a particular case of Theorem \ref{theo-nonexist}. However, since $\max \Hnod_r$ converges monotonically to $0$ as $r$ goes $0,$ Theorem \ref{teo-nonexistBr} is a stronger result if $M$ is compact.

\section{Rotational surfaces}\label{sec-rotacionais}

The proofs of our main results require some barriers in order to see that the Perron solutions are continuous up to the boundary. In most cases, we take as barriers rotational surfaces of $\HH\times\mathbb{R},$ studied first in \cite{STM} and also in \cite{BRWE} and \cite{STP}.

Consider $a\in \mathbb{H}^2$ and denote by $C_a(r)$ (respectively $B_a(r)$) the geodesic circle (respectively disk) centered at $a$ with radius $r.$ Denote by $s:\mathbb{H}^2\setminus B_a(r) \to [0,+\infty)$ the distance function to $C_a(r),$ defined outside $B_r(a).$ If $u:[0,T] \to \mathbb{R}$  is a real function, the rotational surface generated by $u$ around the axis $\{a\}\times \mathbb{R}$ is defined as the set $\{(p,u(s(p))\,|\,p\in \HH\setminus B_r(a)\}.$ Observe that the Laplacian of $s$ is given by $\Delta s = \coth (s+r),$ so in order that the  rotational surface to be an $H-$surface in $\HH\times \mathbb{R},$ $u$ must satisfy 
\begin{equation}\label{eq-solution}
\left(\frac{u'(s)}{\sqrt{1+u'^2(s)}}\right)'+\left(\frac{u'(s)}{\sqrt{1+u'^2(s)}}\right)\coth (s+r) +2H=0.
\end{equation}

%

Integrating this ODE with initial condition $u'(0)=+\infty$ we obtain the general expression corresponding to fixed $r$ and $H$, which we call $\Hf-$nodoid of CMC $H$ and inner radius $r:$

\begin{equation*}
\Hnod_r(s)=\displaystyle\int_0^s \frac{2H(\cosh(r)-\cosh(t+r))+\sinh(r)}{\sqrt{\sinh^2(t+r)-(2H(\cosh(r)-\cosh(t+r))+\sinh(r))^2}}dt,
\end{equation*}
defined in $[0,T_H],$ where $T_H$ is defined in \eqref{eq-TH}.

The $\Hf-$nodoids form a $2-$parameter continuous family of CMC surfaces in $\HH\times \mathbb{R}.$ The parameter $H\in [0,+\infty)$ gives the mean curvature of the surface and the parameter $r\in [0,+\infty]$ gives the radius of their necks ($r=0$ means no neck and $r=+\infty$ means that the neck is a horocycle). Since some particular cases in this family are important to our constructions, we begin by describing them. 

\begin{center}
\begin{figure}[tbh]
\centering
\includegraphics[scale=0.4]{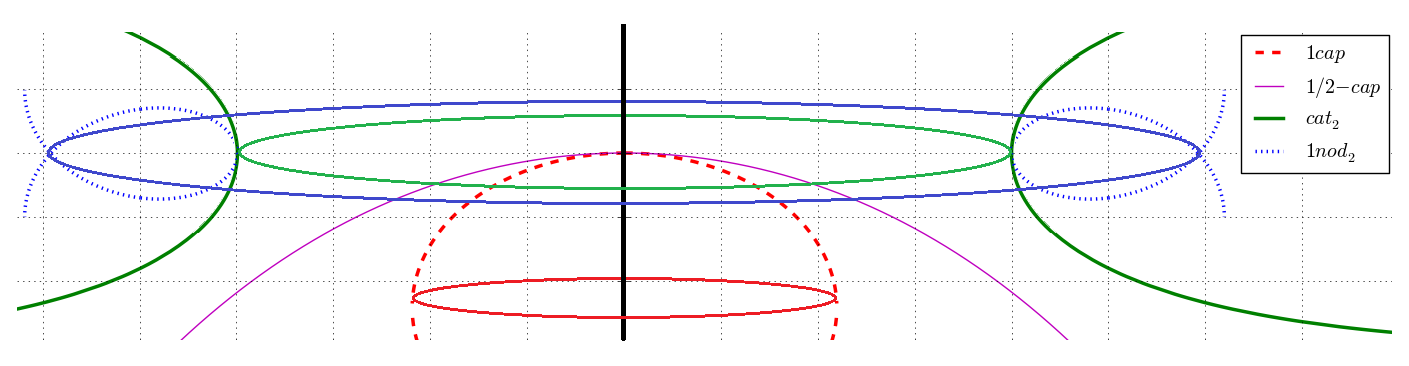}
\end{figure}
\end{center}

\subsection*{The caps for $H\geq 1/2$} 
For $H\ge 1/2,$ taking the limit $r\to 0^+$ of $\Hnod_r$ we get the $\Hf-$spherical cap. Expression \eqref{eq-Hnod} is then integrable.

If $H>1/2,$
\begin{equation}
\begin{array}{ll}\label{eq-def-cap}
\Hscap(s)&=2H\displaystyle\int_0^s \frac{(1-\cosh(t))}{\sqrt{\sinh^2(t)-4H^2(1-\cosh(t))^2}}dt \\
&\displaystyle{=\frac{4H}{\sqrt{4H^2 -1}}\left.\left[\arctan\left(\sqrt{\frac{1 - 4H^2\tanh^2(\frac{s}{2})}{4H^2 -1}}\right)\right]\right|^{s}_{0}    }
\end{array}
\end{equation}

We observe that $$\max|\Hscap|=-\Hscap(T_H)= \frac{4H}{\sqrt{4H^2 -1}}\arctan\left(\sqrt{\frac{1}{4H^2 -1}}\right).$$

For $H=1/2,$
\begin{equation}
\meiocap(s)=2-2\cosh(s/2)
\label{eq:meiocap}
\end{equation}
and $\lim_{s\to \infty} \meiocap(s)=-\infty,$ therefore, we may keep in mind that $\meiocap$ has infinite height.

\subsection*{The catenoids $\cat_r$}
The catenoid is the minimal $\Hnod_r,$ that is, it is the ${\rm \text{\scriptsize{$0$-}}nod_r}.$ It is the only $\Hnod$ given by an strictly increasing function, which is asymptotic to the horizontal plane $\mathbb{H}^2\times \{\cat_r(\infty)\}.$


\begin{equation}\label{eq-def-cat}
\displaystyle\cat_r(s)=\displaystyle\int_0^s \frac{\sinh(r)}{\sqrt{\sinh^2(t+r)-\sinh^2(r)}}dt, \, s\in (r,+\infty).
\end{equation}

It turns out that $\cat_r \le \pi/2,$ as it is proved in \cite{BRWE}. This is also a consequence that $r\mapsto \cat_r(s)$ is increasing and that the supremum of the horocatenoid $\cat_{\infty}$ is $\pi/2,$ see below.

\subsection*{Horonodoids and horocatenoids}\label{subs-horo}

Letting $r\to +\infty$ in expression \eqref{eq-Hnod}, we obtain the $\Hf-$horonodoids. 

The $\Hf-$horonodoids also may be seen as the $\Hf-$surfaces obtained with rotations around a point at infinity. These $\Hf-$surfaces are graphs of functions whose level sets are horocycles with the same point at infinity. To find them we solve $Q_H(u)=0,$ but in this case we assume that $u=u(s)$ depends on the distance $s$ to a horocycle $C\in \HH.$ If $C$ is the boundary of the horodisk $B,$ then in $\HH\setminus \overline{B},$ it holds $\Delta s=1$ (notice that it is also a limit case: $\coth(r+s)$, the Laplacian of the distance of a circle of radius $r,$ tends to $1$ as $r$ tends to $\infty$). Therefore in order to obtain a CMC $\Hf-$surface, $u$ must satisfy
$$\left(\frac{u'(s)}{\sqrt{1+u'^2(s)}}\right)'+\left(\frac{u'(s)}{\sqrt{1+u'^2(s)}}\right) +2H=0.$$ We solve this ODE for the initial condition $u'(0)=+\infty,$ which means that the neck of the surface is the horocycle $C,$ and we find that the function is (the same as obtained letting $r\to \infty$ in the definition of $\Hnod_r$)

\begin{equation}\label{eq-horocoisa}
\begin{array}{ll}
\Hhoro(s)&\displaystyle{=\displaystyle\int_0^{s}\frac{-2H+(1+2H)e^{-t}}{\sqrt{1-(-2H+(1+2H)e^{-t})^2}}dt} \vspace{0.2cm}\\
&\displaystyle{=\left\{\begin{array}{l}\theta - \frac{4H}{\sqrt{1-4H^2}}\tanh^{-1} \left(\sqrt{\frac{1-2H}{1+2H}}\tan\left(\frac{\theta}{2}\right)\right),\text{ if }H<1/2, \vspace{0.2cm} \\ \vspace{0.2cm} \theta - \tan\left(\frac{\theta}{2}\right), \text{ if }H=1/2, \\ \vspace{0.2cm} \theta -\frac{4H}{\sqrt{4H^2-1}}\tan^{-1} \left(\sqrt{\frac{2H-1}{2H+1}}\tan\left(\frac{\theta}{2}\right)\right), \text{ if }H>1/2,
\end{array}\right.}
\end{array}
\end{equation}
where $0\le\theta\le \pi$ is such that $\cos \theta = (1+2H)e^{-s} - 2H.$

\begin{remark}If we use the complex identity $\tan(iz)=i\tanh(z)$ 
we may summarize all expressions in \eqref{eq-horocoisa} by
\begin{equation*}
\Hhoro(s)\displaystyle=\theta - \frac{4H}{\sqrt{1-4H^2}}\tanh^{-1} \left(\sqrt{\frac{1-2H}{1+2H}}\tan\left(\frac{\theta}{2}\right)\right).
\end{equation*} The same may be done for the maximum value of $\Hnod_{\infty},$ given in \eqref{eq-maxHhoro}.\end{remark}

The domain of $\Hnod_{\infty}$ is the same as the $\Hnod_r$ and the maximum height of $\Hnod_{\infty}$ is attained at $x_H(\infty) = \ln(1+1/2H),$ its value is   

\begin{equation}\label{eq-maxHhoro}
\begin{array}{ll}
\Hhoro(\ln\left(\frac{2H+1}{2H}\right))=\left\{\begin{array}{l}\frac{\pi}{2} -\frac{4H}{\sqrt{1-4H^2}}\tanh^{-1}\left(\frac{1-2H}{\sqrt{1-4H^2}}\right)\text{ if }H< 1/2,\vspace{0.2cm} \\ \frac{\pi}{2}-1,\text{ if }H=1/2,\vspace{0.2cm}\\ \frac{\pi}{2} -\frac{4H}{\sqrt{4H^2-1}}\tan^{-1}\left(\frac{2H-1}{\sqrt{4H^2-1}}\right)\text{ if }H> 1/2.\end{array}\right.
\end{array}
\end{equation}

The $\Hhorocoisa$ of neck $C$ is the surface obtained gluing the graph of $\Hhoro$ and the graph of $-\Hhoro.$ 
It is clear that for any horocycle $C\subset\HH$ and any $H\geq 0,$ there is an associated $\Hhorocoisa$ of neck $C.$ Besides, if $C'$ is a horocycle equidistant from $C,$ then the $\Hhorocoisa$ associated to $C'$ is isometric to the one associated to $C.$

Finally, notice that the horocatenoid is a particular case of $\Hf-$horonodoid: it is the $0-$horonodoid. Therefore, it may be obtained either letting $r\to +\infty$ for $\cat_r$ or taking $H=0$ for $\Hnod_{\infty}.$ It has the explicit expression $$\cat_{\infty}(s)=\int_0^{s}\frac{e^{-t}}{\sqrt{1-e^{-2t}}}dt=\frac{\pi}{2}-\sin^{-1}(e^{-s}).$$

\subsection{More about $H-$nodoids}\label{sec-moreHnod}
Recall that $\Hf-$nodoid of neck of radius $r$ is obtained rotating the graph of
\begin{equation*}
\Hnod_r(s)=\displaystyle\int_0^s \frac{2H(\cosh(r)-\cosh(t+r))+\sinh(r)}{\sqrt{\sinh^2(t+r)-(2H(\cosh(r)-\cosh(t+r))+\sinh(r))^2}}dt.
\end{equation*}

The rotational surface may then be reflected about the plane $\HH\times\{0\}$ and glued with its reflection to build a CMC surface. It turns out that this surface, if $H>0,$ intersects itself on two circles, of radius $r$ and $\rho_H(r)$, centered at $a,$ what justifies the name nodoid. For $H>1/2,$ the domain of $\Hnod_r$ is $[0, T_H]$ and after the value $T_H$, it is possible to continue this CMC rotational surface corresponding to $\Hnod_r$, but it is not a vertical graph over $\mathbb{H}^2\times\{0\}$ anymore. 

\begin{center}
\begin{figure}[tbh]
\centering
\includegraphics[scale=0.7]{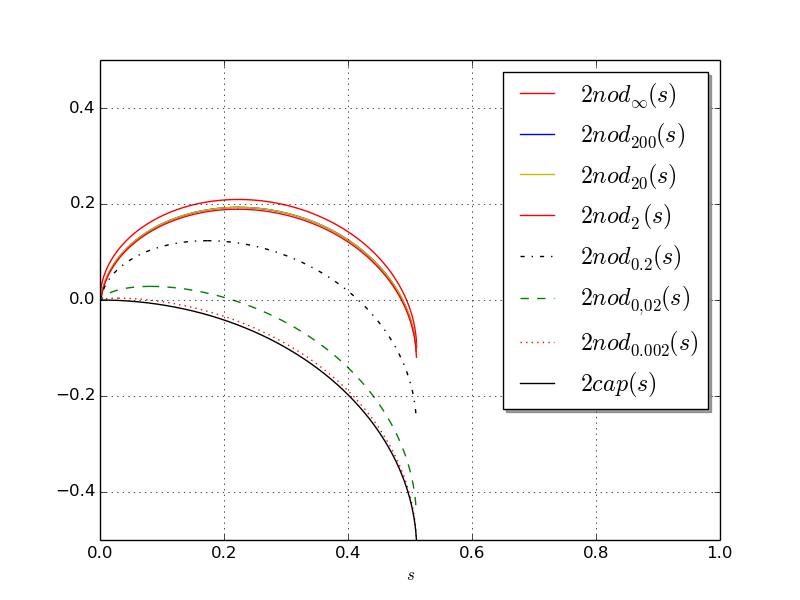}
\end{figure}
\end{center}

Let us list some important facts about $\Hnod_r$, that are used throughout this article.

\begin{description}
	\item[Fact 1.] For fixed $H,$ and $s\in [0,T_H],$ the function that assigns for each $r>0$ the value $\Hnod_r(s)$ is strictly increasing.
	 
	Indeed, a computation gives $$\frac{\partial \Hnod_r(s)}{\partial r}>0.$$
	
	In particular:
	
	\begin{description}
		\item[a)] if $0<H\le 1/2,$ since $\lim_{s\to + \infty} \Hnod_{\infty}(s) = -\infty,$ we obtain that $$\lim_{s\to +\infty} \Hnod_r(s) = -\infty$$ for every $r> 0.$ 
		\item[b)] if $H>1/2,$ we have estimatives for $\Hnod_r(T_H)$ from above and from below:
	$$-\frac{4H}{\sqrt{4H^2-1}} \tan^{-1}\left(\frac{1}{\sqrt{4H^2-1}}\right) \le \Hnod_r(T_H) \le \pi\left(1-\frac{2H}{\sqrt{4H^2-1}}\right).$$
	\end{description}	
	
	Moreover, fixed $H,$ the maximum value of $\Hnod_r$ is bounded from above by $\max \Hnod_{\infty},$ that is, for all $s\in [0,T_H],$ $$\Hnod_r(s) \le \frac{\pi}{2} - \frac{4H}{\sqrt{4H^2-1}} \tan^{-1}\left(\frac{2H-1}{\sqrt{4H^2-1}}\right).$$
	
	\item[Fact 2.] The maximum value of $\Hnod_r$ is attained at $$x_H(r):=\cosh^{-1}\left(\cosh r + \frac{\sinh r}{2H}\right) - r.$$ It is easy to check that $x_H(r)$ is increasing with $r,$ and therefore $$0\le x_H(r) \le \ln \left(1+\frac{1}{2H}\right).$$ 
	
We also remark that for any $r>0,$ $x_0(r) = +\infty,$ which is a natural conclusion: the maximum value of $\cat_r$ is attained at infinite distance, since it is strictly increasing and has horizontal asymptote.
	
	\item[Fact 3.] Let $\rho_H(r)$ denote the positive zero of $\Hnod_r,$ that always exists for $H>0$ and $r>0.$ The positivity of $\Hnod_r$ in $(0,\rho_H(r))$ is important to $\Hnod_r$ be an upper barrier. Although we do not have an explicit expression for $\rho_H(r),$ the next proposition provides an estimate for it from below, which provides hypothesis \eqref{eq-corollary} on Theorem \ref{teo-h>0}. Its proof is inspired in Proposition 1 of \cite{Jaime}, that corresponds to an analogous result in $\mathbb{R}^3.$
\end{description}

\begin{proposition}\label{miriam}
Given $r>0$ and $H>0,$ we define $x_H(r)>0$ as the positive solution of
\begin{equation}\label{eq-x0}
2H\cosh(x_H(r)+r)= 2H\cosh(r)+\sinh(r).
\end{equation}
Then $\Hnod_r(s)\ge 0$ for any $s\in [0,2x_H(r)].$
\end{proposition}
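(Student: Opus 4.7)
The plan is to exploit a reflection symmetry of the integrand about the maximum point $x_H(r)$. First, the sign of the integrand defining $\Hnod_r(s)$ coincides with the sign of its numerator $N(t):=2H(\cosh r-\cosh(r+t))+\sinh r$, since the denominator is positive throughout the domain. As $N(0)=\sinh r>0$, $N$ is strictly decreasing in $t$, and \eqref{eq-x0} identifies $x_H(r)$ as its unique zero, so $\Hnod_r$ strictly increases on $[0,x_H(r)]$ and strictly decreases on $[x_H(r),T_H]$. Since $\Hnod_r(0)=0$, nonnegativity on the whole interval $[0,2x_H(r)]$ is equivalent to the single inequality $\Hnod_r(2x_H(r))\ge 0$; moreover, using Fact 2 together with the elementary estimate $(1+1/(2H))^2\le(2H+1)/(2H-1)$ for $H>1/2$, one checks that $2x_H(r)\le T_H$, so this endpoint always lies in the domain.

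Next, I would substitute $\xi=r+t$ and set $\xi_0:=r+x_H(r)$, which by \eqref{eq-x0} satisfies $2H\cosh\xi_0=2H\cosh r+\sinh r$. Writing the resulting integrand as
\begin{equation*}
g(\xi):=\frac{\cosh\xi_0-\cosh\xi}{\sqrt{\sinh^2\xi-4H^2(\cosh\xi_0-\cosh\xi)^2}},
\end{equation*}
the reflection $\xi\mapsto 2\xi_0-\xi$ folds the two halves of the integral onto $[r,\xi_0]$ and yields
\begin{equation*}
\Hnod_r(2x_H(r))=2H\int_r^{\xi_0}\bigl[g(\xi)+g(2\xi_0-\xi)\bigr]\,d\xi,
\end{equation*}
so it suffices to prove $g(\xi)+g(2\xi_0-\xi)\ge 0$ pointwise on $[r,\xi_0]$.

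Setting $A:=\cosh\xi_0-\cosh\xi\ge 0$ and $B:=\cosh(2\xi_0-\xi)-\cosh\xi_0\ge 0$, I would square the desired inequality; the terms involving $4H^2A^2B^2$ cancel, and the problem reduces to the $H$-free hyperbolic statement $A\sinh(2\xi_0-\xi)\ge B\sinh\xi$. Writing $\phi:=\xi_0-\xi\in[0,x_H(r)]$, expanding $A$ and $B$ via $\cosh(\xi_0\pm\phi)=\cosh\xi_0\cosh\phi\pm\sinh\xi_0\sinh\phi$, and applying the sum/difference formulas for $\sinh(\xi_0\pm\phi)$, I expect the combination $A\sinh(\xi_0+\phi)-B\sinh(\xi_0-\phi)$ to telescope to $2\sinh\xi_0\cosh\xi_0(\cosh\phi-1)$, which is manifestly nonnegative.

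The main obstacle is identifying the correct pairing: because $|N|$ grows faster on the decreasing side than on the increasing side, a pointwise comparison of $g(\xi)$ with $|g(2\xi_0-\xi)|$ that uses only sign information will fail, and one must square first in order to cancel the $H$-dependent cross terms. Once that algebraic reduction is made, the remaining inequality is an elementary identity in hyperbolic functions.
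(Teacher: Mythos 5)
Your proposal is correct and is essentially the paper's own argument: both exploit the symmetry about the maximum $x_H(r)$ (your folding of the integral is the same as the paper's comparison $|\Hnod_r'(x_H(r)+t)|\le|\Hnod_r'(x_H(r)-t)|$) and both reduce to the identical $H$-free inequality $A\sinh(\xi_0+\phi)\ge B\sinh(\xi_0-\phi)$, which expands to $2\sinh\xi_0\cosh\xi_0(\cosh\phi-1)\ge 0$. The only cosmetic difference is that you square to cancel the $H$-terms while the paper normalizes the slope and invokes monotonicity of $z\mapsto z/\sqrt{1-z^2}$; your explicit check that $2x_H(r)\le T_H$ is a small welcome addition.
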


\begin{proof}
We first notice that $\Hnod_r(0)=0$ and $x_H(r)$ is the only absolute maximum of $\Hnod_r$. Therefore, it suffices to prove that $\Hnod_r$ decreases in $[x_H(r),2x_H(r)]$ slower than it increases in $[0,x_H(r)]$. For that, we verify the inequality $$|\Hnod_r'(x_H(r)+t)|\le |\Hnod_r'(x_H(r)-t)|\text{ for }t\in (0,x_H(r)).$$

To simplify notation, let us denote $x_H(r)+r$ by $x.$ Notice that $\Hnod_r'(s) = \frac{g(r+s)}{\sqrt{1-g(r+s)^2}},$ where $$g(t)=\frac{2H\cosh r + \sinh r - 2H \cosh t}{\sinh t}.$$ It implies that we must prove that $$-\frac{g(x+t)}{\sqrt{1-g(x+t)^2}}\le \frac{g(x-t)}{\sqrt{1-g(x-t)^2}},$$ which is equivalent to $-g(x+t) \le g(x-t)$.The definition of $x_H(r)$ implies that the previous inequality is equivalent to
$$\frac{-\cosh(x)+\cosh(x+t)}{\sinh(x+t)}\le \frac{\cosh(x)-\cosh(x-t)}{\sinh(x-t)},$$ 
and a straightforward computation gives that it is equivalent to $\sinh(x) (\cosh(t) -1) \ge 0,$ which is of course valid for any $t\in \mathbb{R}.$
\end{proof}


\begin{description}
\item[Fact 4.] For fixed $r,$ the function $H\mapsto \Hnod_r(s)$ is decreasing (with ``decreasing domain'' as well). More precisely, if 
${\rm \text{\scriptsize{H$_1$-}}nod}(s)$ and ${\rm \text{\scriptsize{H$_2$-}}nod}(s)$ are defined for $H_1<H_2,$ then ${\rm \text{\scriptsize{H$_1$-}}nod}(s)>{\rm \text{\scriptsize{H$_2$-}}nod}(s).$
\end{description}


\section{Proofs of the existence results}\label{sec-proofs}

The setting of our existence results is the following:
Let $\Omega$ be the domain enclosed by $\alpha$ and $\beta,$ two simple closed curves in $\HH\times\{0\}$ such that $\alpha$ is contained in 
the interior of the region bounded by $\beta.$ Let $r>0$ such that $\alpha$ satisfies the interior circle condition of radius $r,$ i.e., for any $p\in \alpha,$ there is a circle of radius $r$ tangent to $\alpha$ at $p$ and contained in the region bounded by $\alpha.$
Let $R>0$ be such that $\beta$ satisfies the exterior circle condition of radius $R,$ that is, for any $p\in \beta,$ there is a circle of radius $R$ tangent to $\beta$ at $p$ that contains the region bounded by $\beta.$ Finally, let $d$ denote the distance between $\alpha$ and $\beta.$ 

\

To prove the existence results we use Perron's Method, see Section 2.8 of \cite{GT}. Recall that a function $v\in C^0(\Omega)$ is called a subsolution (resp. $w$ is a supersolution) of $Q_H=0$ if for any $U\subset \subset \Omega$ and for any $u:U\to \mathbb{R}$ solution of 
$Q_H=0$ in $U$ with $v\vert_{\partial U}\leq u\vert_{\partial U}$ (resp. $w\vert_{\partial U}\geq u\vert_{\partial U}$), it holds that $v\leq u$ (resp. $w\geq u$) in $U.$ If $v\in C^2(\Omega),$ then $Q_H(v)\geq 0$ in $\Omega$ is equivalent to $v$ being a subsolution. Analogously for supersolutions with $Q_H\leq 0.$ The Comparison Principle holds for $Q_H$: if $v,w$ are respectively sub and supersolutions and $v\le w$ on $\partial \Omega$, then $v\le w$ in $\Omega$. 

To simplify notation, from now on we denote by $\varphi$ the function $\varphi: \partial \Omega\to \mathbb{R}$ given by $\varphi(p)=0$ if $p\in \beta$ and $\varphi(p)=h$ for $p\in \alpha.$ Hence Problem \eqref{eq_problemaPH} is equivalent to

\begin{equation}\label{eq_problemaPHlinha}
\left\{
\begin{array}
[c]{l}%
Q_{H}(u)=\mathrm{div}\left(  \frac{\nabla u}{\sqrt{1+\rvert\nabla u\rvert^{2}%
}}\right)  +2H=0  \text{ in }\Omega\\
u\rvert_{\partial\Omega}=\varphi.  
\end{array}
\right.
\end{equation}

Consider now the set
$$S = \{ z \in C^{2}\left(\Omega\right) \cap C^{0}\left(\overline{\Omega} \right) \,\vert \, z \text{ is a subsolution of }Q_{H} = 0, 
z|_{\partial \Omega} \leq \varphi\}.$$

The set $S$ is not empty since the constant function $v:=\min \varphi=\min\{0,h\}$ belongs to it. The following lemma proves that any element in $S$ is bounded from above.

\begin{lemma}\label{lema-supersol}
Let $\Omega\subset\HH\times\{0\}$ be the domain bounded by $\alpha \cup \beta,$ as above, and $H\geq 0$ a given constant. If $H > \frac{1}{2}$, suppose additionally that $$R\le T_H \text{ and }
h\leq \Hscap(T_H-d)-\Hscap(T_H).$$ 
Then Problem \eqref{eq_problemaPH} admits a global 
supersolution, namely, there is a function $w:\overline{\Omega}\rightarrow \mathbb{R},$ such that $Q_H(w)\leq 0$ in $\Omega$ and 
$w\vert_{\alpha}\geq h$ and $w\vert_{\beta}\geq 0.$
\end{lemma}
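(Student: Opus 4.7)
The plan is to take $w$ as a vertical translate of one of the rotational CMC~$H$ graphs from Section~\ref{sec-rotacionais}. Since every such graph automatically satisfies $Q_H=0$, the supersolution condition $Q_H(w)\le 0$ is built in, and the only thing to verify is that the translation makes $w\ge \varphi$ on $\partial\Omega$.

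For $H=0$ the constant function $w\equiv h$ works immediately, since $Q_0$ vanishes on constants and $\varphi\le h$ on $\partial\Omega$. For $0<H\le 1/2$ we have $T_H=+\infty$, and Fact~1(a) gives $\lim_{s\to+\infty}\Hnod_{r^*}(s)=-\infty$ for any $r^*>0$. Fix a geodesic circle $C_a(r^*)$ whose enclosed disk is disjoint from the compact set $\overline{\Omega}$, and let $s(p)=\mathrm{dist}(p,C_a(r^*))$. Then $\Hnod_{r^*}\circ s$ is a CMC~$H$ rotational graph on $\HH\setminus\overline{B_a(r^*)}\supset\overline{\Omega}$, continuous and hence bounded on $\overline{\Omega}$; adding a sufficiently large constant $C$ produces $w=\Hnod_{r^*}\circ s+C$ with $Q_H(w)=0$ and $w\ge\varphi$ on $\partial\Omega$.

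The substantive case is $H>1/2$, for which $T_H<+\infty$. The exterior circle condition produces $q_0\in\HH$ such that $\beta$ and its interior lie in $\overline{B_{q_0}(R)}$, and the hypothesis $R\le T_H$ then forces $\overline{\Omega}\subset\overline{B_{q_0}(T_H)}$, which is exactly the closed domain of the $\Hf$-spherical cap centered at $q_0$. Writing $s(p)=\mathrm{dist}(p,q_0)$, I would set
\[
w(p)=\Hscap(s(p))-\Hscap(T_H)+h,
\]
a vertical translate of the cap graph, so that $Q_H(w)=0$ in $\Omega$; monotonicity of $\Hscap$ together with $s(p)\le T_H$ on $\overline{\Omega}$ give $w\ge h\ge 0$ throughout, hence $w|_\alpha\ge h$ and $w|_\beta\ge 0$. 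The main technical obstacle here is to verify that $w$ is genuinely $C^2$ on the open set $\Omega$ despite the vertical tangent of $\Hscap$ at $s=T_H$ and the singularity of $s$ at $q_0$: the first is harmless because $\Omega$ is open and $\Omega\subset B_{q_0}(T_H)$, so $s(p)<T_H$ in $\Omega$; the second is overcome by the even expansion $\Hscap(s)=-Hs^2/2+O(s^4)$ near $s=0$, which makes $\Hscap\circ s$ smooth across $q_0$ even when $q_0\in\Omega$. The sharp form $h\le\Hscap(T_H-d)-\Hscap(T_H)$ in the hypothesis measures the vertical drop of the cap over a radial span of length $d$ adjacent to its rim, and is the natural bound in the tight configuration where $q_0$ is placed so that the cap's rim sits on $\beta$ and $\alpha$ on the parallel level $s=T_H-d$.
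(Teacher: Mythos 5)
Your construction does establish the literal statement when $h\ge 0$, but by a route different from the paper's. For $0\le H\le\frac12$ the paper does not use a distant rotational graph plus a large constant: it takes the CMC $\frac12$ cap $w=\meiocap\circ s-\meiocap(L)$ over a disk of radius $L$ tangent to $\beta$ and containing $\Omega$, with $L$ chosen large enough that $\meiocap(L-d)-\meiocap(L)\ge h$; this is a (strict, if $H<\frac12$) supersolution since $Q_H(w)=2H-1\le 0$. Your variant (an $\Hnod_{r^*}$ with neck disjoint from $\overline\Omega$, raised by a big constant) is equally valid for the statement as written, since $Q_H$ is vertical-translation invariant and $T_H=+\infty$ here. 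For $H>\frac12$ you use the same $T_H$-cap as the paper, but shifted up by $h$, which makes the hypothesis $h\le\Hscap(T_H-d)-\Hscap(T_H)$ unnecessary. This looks like a strengthening, but note what it loses: the paper's cap is anchored with $C_a(T_H)$ tangent to $\beta$ and is \emph{not} shifted, so that $w$ vanishes at the tangency point of $\beta$ while the hypothesis on $h$ is exactly what guarantees $w|_\alpha\ge\Hscap(T_H-d)-\Hscap(T_H)\ge h$. That calibrated version is what is reused later as an upper barrier at points $p\in\beta$ in the proofs of Theorems \ref{teo-h>0} and \ref{teo-h<0} (a barrier must satisfy $w(p)=\varphi(p)=0$); your translate equals $h$ on the rim and cannot play that role, so it proves the lemma but is not a drop-in replacement for the construction the paper actually needs downstream. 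Your smoothness remark at the center $q_0$ is fine (the rotational CMC $H$ sphere is smooth at its pole; the paper's choice of center has the same implicit feature).

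One genuine oversight: the lemma does not assume $h\ge 0$, and it is invoked in the proof of Theorem \ref{teo-h<0} with $h\le 0$. Your $H=0$ step (``$\varphi\le h$'') and your $H>\frac12$ step (``$w\ge h\ge 0$'') silently use $h\ge 0$ and fail as written for $h<0$. The repair is immediate: translate by $\max\{h,0\}$ instead of $h$ (equivalently, for $h<0$ take the constant $0$ when $H=0$ and drop the ``$+h$'' in the cap when $H>\frac12$), after which $w\ge\max\{h,0\}\ge\varphi$ on $\partial\Omega$ in every case; with that adjustment your argument covers the full statement.
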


\begin{proof}

The idea of this proof is to find a spherical cap that wraps $\alpha \cup \beta,$ which is possible for $h\leq \Hscap(T_H-d)-\Hscap(T_H),$ if $H>1/2,$ and for any $h,$ if $H\le 1/2.$ 

For $0\leq H \leq \frac{1}{2}$, consider $L > R$ great enough such that 
\begin{equation}\label{eq-01}
2\cosh\left(\frac{L}{2}\right) \geq 2\cosh \left(\frac{L-d}{2}\right) + h.
\end{equation}
Let $B_a=B_a(L)$ be a disk of radius $L$ and center $a$ containing $\Omega$ and such that $C_a=\partial B_a$ is tangent to $\beta.$
Let $s$ be the distance to $a$ in $\mathbb{H}^2$ and define $w: \overline{\Omega}\rightarrow \mathbb{R}$
by 
\begin{equation}\label{eq-11}
w(q) = \meiocap(s(q)) - \meiocap(L). 
\end{equation}  
Since $\Omega \subset B_{a},$ $s\leq L$ in $\beta.$ Then, if $q\in \beta,$
\begin{equation}
 w(q) = \meiocap(s(q)) - \meiocap(L) \geq 0,
\end{equation}
because $\meiocap$ is a decreasing function. Therefore, $w_{\vert \beta} \geq 0.$ We claim that 
$w_{\vert\alpha} \geq h.$ Indeed, given $q \in \alpha$ we have $s(q) \leq L - d$ since 
the disk of center $a$ and radius $L - d$ contains $\alpha,$ hence
\begin{equation}\label{eq-13}
w(q) = \meiocap(s_{a}(q)) - \meiocap(L) \geq \meiocap(L - d) - \meiocap(L) \geq h.
\end{equation}
By the previous section, $Q_{H}(w) = -2(\frac{1}{2}) + 2H \leq 0$
in $\Omega$ and from the Comparison Principle, $z \leq w$ for all 
$z$ solution of \eqref{eq_problemaPH}.

In the case $H > \frac{1}{2}$, consider $C_{a}=C_a(T_H)$, a circle of radius $T_H,$ with $a$ and $s$ as above.
Define $w: \overline{\Omega} \rightarrow \mathbb{R}$ by 
\begin{equation}\label{eq-21}
 w(q) = \Hscap(s(q)) - \Hscap(T_H)
\end{equation}
so that, by Section \ref{sec-rotacionais}, the graph of $w$ is part of 
a spherical cap with CMC $H$ which implies $Q_H(w)= 0.$ Proceeding as above for $T_H$ replacing $L,$ and using \eqref{eq-cap}, we conclude that $w|_{\partial\Omega}\geq \varphi.$ Therefore, $w$ is an upper bound for $S.$
\end{proof}

\

It follows that the function $u:\Omega \to \mathbb{R}$ given by
$$u(x):= \sup\{z(x)\,|\, z\in S\}$$ is well defined. Standard arguments on Perron's Method also give that $u\in C^2(\Omega)$ and that $Q_H(u)=0$ in $\Omega.$

For proving that $u$ satisfies the boundary conditions $u_{\vert\beta} = 0$ and $u_{\vert\alpha} = h$, we construct barriers at any point of $\partial\Omega$. Lower and upper barriers at $p\in \partial \Omega$ are respectively sub and supersolutions $v,w\in C^2(\Omega)\cap C(\overline{\Omega})$ with $v(p)=w(p)=\varphi(p)$ and $v|_{\partial \Omega}\le\varphi$, $w|_{\partial \Omega}\ge \varphi$. After that, the uniqueness of the solution follows from the Comparison Principle.

Since the cases $h>0$ and $h<0$ are quite different, we now split the proofs, as we have done with the statements.

\

%

\begin{proof}[Proof of Theorem \ref{teo-h>0}]

{\bf Barriers at points of $\beta.$}
Recall that $h\ge 0$, and therefore $\alpha$ is above $\beta$. It follows immediately that the constant function $v\equiv 0$ is a lower barrier at any point of $\beta$. Furthermore, the supersolutions given by Lemma \ref{lema-supersol} are upper barriers for a point $p\in \beta$, if we choose the exterior circle tangent to $\beta$ at $p.$ 

\

{\bf Barriers at points of $\alpha.$}
For $p \in \alpha$, using the interior circle condition of radius $r$ in $\alpha$, we may consider a circle $C_{a}$ of radius $r$ and center $a=a(p)$, tangent to  $\alpha$ in $p$ and contained in the bounded region whose boundary is $\alpha.$ Let $s:\overline{\Omega}\to \mathbb{R}$ be the distance to $C_a.$

The subsolution at $p$ is either a catenoid of neck $r$ or a truncated cone of neck $C_a.$ The fact that one of them is below $\beta$ is a consequence of hypothesis \eqref{eq-hipsubs}, as explained below.

If $h\leq \cat_r(d),$ let $v_{p}:\overline{\Omega}  \rightarrow \mathbb{R}$ is given by $v_{p}(q) = -\cat_{r}(s(q)) + h$, $q \in \overline{\Omega}.$ Using the previous section we have $Q_{H}(v_{p}) = 2H \geq 0.$ Notice that if $q \in \beta$, then $s(q) \geq  d,$ which implies that $-\cat_{r}(s(q)) \leq -\cat_{r} (d)$ and therefore
$$v_{p}(q) = -\cat_{r}(s(q)) + h \leq -\cat_{r}(d) + h \leq 0.$$ 
Hence $(v_{p})_{\vert\beta} \leq 0.$ If $q \in \alpha$, since $C_{a}$ is contained in the bounded region of boundary $\alpha,$ we have $-\cat_{r}(s(q)) \leq 0.$ Hence $$v_{p}(q) = -\cat_{r}(s(q)) + h \leq h,$$ and we obtain $(v_{p})_{\vert\alpha} \leq h.$ Since $p \in C_{a}$ we have $s(p) = 0$ and $v_{p}(p) = h = \varphi(p).$ 

If $h> \cat_r(d),$ we take as a barrier a truncated cone of inner radius $r$ containing $C_a$ and such that at height $0,$ its intersection with the slice $\mathbb{H}^2\times\{0\}$ is inside $\Omega.$ Let $v_{p}:\overline{\Omega}  \rightarrow \mathbb{R}$ be given by $$v_{p}(q) = -\frac{h}{d}(s(q)) + h,$$ which satisfies
$$Q_{H}(v_{p})=\frac{-h/d}{\sqrt{1+(h/d)^2}}\coth(s+r)+2H.$$ Therefore, it is a subsolution in $\Omega$ if and only if
$$\frac{h}{\sqrt{d^2+h^2}}\leq \frac{2H}{\coth r},$$
that is a consequence of \eqref{eq-hipsubs}. It is clear that $v_p(p)=0$ and $v_p\leq \varphi$ on $\partial\Omega.$

\

To finish, the supersolution at $p\in \alpha$ is the $\Hf-$nodoid of neck $C_a.$ Indeed, consider the function $w_{p}: \overline{\Omega}  \rightarrow \mathbb{R}$ defined by $w_{p}(q) = \Hnod_{r}(s(q)) + h$ with $r$ and $a$ as defined before. By Section \ref{sec-rotacionais}, we have that $Q_{H}(w_{p}) = 0.$ Let $q \in \overline{\Omega}$ and $q' \in \beta$ the intersection of $\beta$ and the geodesic that starts at $q$ and passes through $a.$ Notice that 
$$d(q, q') = d(a, q) + d(a, q')= s(q)+s(q')+2r\geq s(q)+d+2r.$$ Using hypothesis \eqref{eq-corollary},
$$s(q)\leq \diam \beta - d-2r \leq 2x_H(r).$$ 
By Proposition \ref{miriam}, it follows that $\Hnod_{r}(s(q)) \geq 0$ and therefore $(w_{p})_{\vert \overline{\Omega}} \geq h.$ 

Moreover, 
$$w_{p}(p) = \Hnod_{r}(0) + h = h = \varphi(p).$$ We conclude by noticing that since $\alpha$ is between $C_a$ and $\beta,$ it follows immediately that $\Hnod_r(s(q))\ge 0$ for $q \in \alpha,$ what implies that $w_p \ge h$ on $\alpha.$
This completes the proof.  
\end{proof}

\

\begin{proof}[Proof of Theorem \ref{teo-h<0}]

Recall that in this case $h\le 0$, hence $\alpha$ is below $\beta.$

\

{\bf Barriers at points of $\beta.$}
Since $\beta$ is above $\alpha,$ the supersolutions given by Lemma \ref{lema-supersol} are upper barriers for a point $p\in \beta,$ if we choose the exterior circle tangent to $\beta$ at $p.$ 

To find the subsolutions associated to points in $\beta,$ consider the upside down problem for $H=0.$ It consists in finding the solution $z\in C^2(\Omega)\cap C^0(\overline{\Omega})$ of $Q_0=0$ that takes boundary value $-\varphi.$ By hypothesis $-h\leq \Hnod_r(d)$ holds, and Fact 4 (Section \ref{sec-moreHnod}) implies that $\Hnod_r(d)\leq \cat_r(d),$ therefore Theorem \ref{teo-h>0} applies for boundary data $-\varphi.$
Hence $v=-z$ has minimal graph and works as a barrier from below at all points of $\beta.$

\

{\bf Barriers at points of $\alpha.$}   
Since $\alpha$ is below $\beta$ and constant functions are subsolutions, $v\equiv -h$ is a lower barrier for all points of $\alpha.$
For the supersolutions, given $p\in \alpha,$ consider $C_a$ the circle of radius $r$ internally tangent to $\alpha$ at $p.$ Then take $w(q)=\Hnod_r(s(q))+h$ the nodoid of inner radius $r,$ depending on the distance $s$ to $C_a.$ We claim that $w\geq\varphi$ on $\beta.$ To see that, observe that for any $q\in \beta, d\leq s(q)\leq \diam \beta -(2r+d)$ and since $s\mapsto \Hnod_r(s)$ is a concave function, it attains its minimum value on the boundary. Therefore, for any $q\in \beta,$ $w(q)\geq \min\{w(d), w(\diam \beta-(2r+d))\}\geq 0$ from \eqref{eq-teoh<0nod}. We conclude that $w\geq\varphi$ on $\beta$ and that $w\geq h$ on $\alpha,$ so that it is an upper barrier.
\end{proof}

\

We briefly remark some aspects of the hypotheses of Theorem \ref{teo-h>0}:
\begin{enumerate}

\item The first inequality \eqref{eq-corollary} states that if $\beta$ is too far from $\alpha$ the Theorem does not apply. This agrees with the fact that if $\Omega$ contains a large annulus, then it only admits  solution for footnotesize $h.$ (see \cite{Senni}). 

\item Since $2(r+d)<\diam \beta,$ inequality \eqref{eq-corollary} also implies that $$\cosh(r+d/2)\le \cosh(r)+\frac{\sinh(r)}{2H}$$ which can be seen as an upper bound for $d.$

\item For $H\le 1/2,$ one of the existence hypothesis is \eqref{eq-hipsubs}. It is vacuous for $\coth(r) \le 2H,$ what happens because any truncated cone of inner radius $r,$ top inside $\alpha\subset \mathbb{H}^2\times \{h\}$ and outer radius $r+d$ inside $\beta\subset \mathbb{H}^2\times \{h\}$ has mean curvature footnotesizeer than $2H.$ Besides, if $\coth(r) > 2H,$ condition \eqref{eq-hipsubs} is equivalent to $h\le \cat_r(d)$ for footnotesize $d$ and $h\le \frac{2Hd}{\sqrt{\coth^2(r)-4H^2}}$ for large $d.$ The change of behavior occurs at $d_0$ the only solution of
$$\cat_r(d)= \frac{2Hd}{\sqrt{\coth^2(r)-4H^2}}.$$ It is natural to expect that for footnotesize values of $d,$ the catenoid gives a higher subsolution because it has initial vertical slope. On the other hand, for large $d,$ since the catenoid is asymptotic to plane, the cone is a higher subsolution.


\end{enumerate}


\section{Proofs of the non existence results}

In this section, we will prove the non existence result about a constant mean curvature surface contained in a slab of $\mathbb{H}^2 \times \mathbb{R}$ using a condition involving the distance between the slices that bound the slab and the given constant $H.$ The proof was inspired by Proposition 3.2 of \cite{Jaime}. Other similar developments concerning this problem were recently proposed by \cite{AT} to the case $H = 0$ and  by \cite{Senni} to  the case $H \in (0, 1/2).$

\

\begin{proof}[Proof of Theorem \ref{theo-nonexist}]


Let $M$ be as in Theorem \ref{theo-nonexist} and let $B$ be an horodisk such that $M\subset B\times \mathbb{R}$, and denote by $C=\partial B$. For each $t>0$, let $C_t\subset B$ be the horocycle at distance $t$ of $C$. Consider $\mathcal{S}_{t}$ the vertical translation by $h_H/2$ upwards of the $\Hhorocoisa$ of neck $C_t.$


It follows immediately from the fact that $\{\mathcal{S}_t\}_{t\ge 0}$ foliates $B\times[0,h_H]$ that there exists $T>0$ such that $M$ and $\mathcal{C}_{T}$ have a tangency point. If we assume that $h> h_H$, this point cannot be on $\partial M$ and therefore it must occur an interior tangency point, which implies, by the Tangency Principle, that $M$ coincides with $\mathcal{C}_T$, an absurdity. Hence $h\le h_H$, concluding the proof.
\end{proof}

\

\begin{proof}[Proof of Theorem \ref{teo-nonexistBr}]
We argue as in the proof of Theorem \ref{theo-nonexist}. 

For $r>0,$ let $\mathcal{H}_r$ be the surface obtained rotating the graph of $\Hnod_r(s)$ in  restricted to $[0,x_H(r)]$ glued with its reflection around the plane $\HH\times \{0\},$ i. e., $$\mathcal{H}_r=\{\left(p, \pm\Hnod_r(s(p))\right)\, \vert \, 0\leq s(p)\leq  x_H(r)\},$$
for $s(p)$ the distance between $p$ and $\partial B_r.$
For $r\in (0,r^*),$ a member of the family $\mathcal{H}_r$
must have an interior contact point with $M$ if $h> 2\max \Hnod_{r^*}.$
\end{proof}

%
%

\centerline{
\begin{tabular}{ccccc}
{\footnotesize Rodrigo B. Soares} & $\hspace{0.1cm}$&{\footnotesize Patrícia K. Klaser} & $\hspace{0.1cm}$& {\footnotesize Miriam Telichevesky} \\
{\footnotesize Univ. Federal do Rio Grande}  &  &{\footnotesize Univ. Federal do Rio Grande do Sul}  &  & {\footnotesize Univ. Federal do Rio Grande do Sul}\\
{\footnotesize Inst. de Matem\'atica, Estat\'istica e F\'isica} &
&{\footnotesize Instituto de Matem\'atica} &
& {\footnotesize Instituto de Matem\'atica}\\
{\footnotesize Av. It\'alia, Km 8} & & {\footnotesize Av. Bento Gon\c calves 9500} & & {\footnotesize Av. Bento Gon\c calves 9500}\\
{\footnotesize 96201-900 Rio Grande-RS } & &{\footnotesize 91509-500 Porto Alegre-RS } & &{\footnotesize 91509-500 Porto Alegre-RS }\\
{\footnotesize  BRASIL} &  &{\footnotesize  BRASIL} &  & {\footnotesize BRASIL} \\
{\footnotesize rodrigosoares@furg.br}& &{\footnotesize patricia.klaser@ufrgs.br}& &{\footnotesize miriam.telichevesky@ufrgs.br} \\
\end{tabular}}

\end{document}